\newtheorem{lem}{Lemma}
\newtheorem{theor}{Theorem}
\begin{document}

\author{Andrei Alpeev  \footnote{Chebyshev Laboratory, St. Petersburg State University, 14th Line, 29b, Saint Petersburg, 199178 Russia}}
\title{On Pinsker factors for Rokhlin entropy\footnote{This research is supported by the Chebyshev Laboratory  (Department of Mathematics and Mechanics, St. Petersburg State University)  under RF Government grant 11.G34.31.0026 and by JSC "Gazprom Neft"}}

\maketitle
\begin{abstract}
In this paper we will prove that any dynamical system posess the unique  maximal factor of zero Rokhlin entropy, so-called Pinsker factor. It is proven also, that if the system is ergodic and this factor has no atoms then system is relatively weakly mixing extension of its Pinsker factor. \\

\end{abstract}

keywords: Pinsker factor, Rokhlin entropy, generating partition, relatively weakly mixing extension  

\section{Introduction}


A few yesrs ago in work \cite{B10} Lewis Bowen introduced a new invariant for actions of sofic groups: a {\em sofic entropy}, and it lead to the great progress in the problem of classification of Bernoulli shifts up to measure conjugacy and even(for some class of groups) up to orbit equivalence. Sofic groups form a very large class of countable groups, but it is still an open question, whether all the countable groups are sofic or not.
 
So, sofic entropy, potentially, don't work for some groups. There is, altough a very natural alternative: minimal entropy of the generating partition, so-called {\em Rokhlin entropy}. The name comes from the result of Rokhlin, stating that Kolmogorov entropy for aperiodic transformations equals the infimum of the entropies of generating partitions \cite{Ro67}.
The fact that this is an invariant is obvious from the definition. It follows from \cite{B10} that Rokhlin entropy equals to the entropy of the base space for Bernoulli shifts over sofic groups. It is well known that sofic entropy is bounded from above by Rokhlin entropy(see \cite{B10},\cite{Ke13}).
In the work \cite{SeI} Seward proved a generalisation for Roklin entropy of Krieger and Denker theorems. Connection between the Rokhlin entropy, Gottschalk's surjunctivity conjecture and Kaplansky's direct finitness conjecture is established in \cite{SeII}.

In the realm of the Komogorov entropy it is well known that every system posess the unique maximal factor of zero entropy, so-called {\em Pinsker factor}, and it is known that the system isteslf is a relatively weakly mixing extension of its Pinsker factor(see \cite{Gl03}). In this paper we will transfer this classical results into the new setting of Rokhlin entropy. Namely, we will prove the theorem:
\begin{theor}\label{thm:pinsker_exists}
Every dynamical system contains a Pinsker subalgebra with respect to Rokhlin entropy, that is the unique maximal subalgebra  among subalgebras with zero Rokhlin entropy. 
\end{theor}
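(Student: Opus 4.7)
The plan is to let $\mathcal{Z}$ denote the collection of all $G$-invariant sub-$\sigma$-algebras of zero Rokhlin entropy, set $\Pi := \bigvee_{\mathcal{F} \in \mathcal{Z}} \mathcal{F}$, and prove $\Pi \in \mathcal{Z}$. Existence and uniqueness of the maximal zero-entropy invariant subalgebra then follow immediately from the construction of $\Pi$ as the join of every element of $\mathcal{Z}$.

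The principal stability lemma to establish first is: if $\mathcal{F}_1, \mathcal{F}_2, \ldots \in \mathcal{Z}$, then $\bigvee_n \mathcal{F}_n \in \mathcal{Z}$. For any $\varepsilon > 0$, I pick for each $n$ a countable generating partition $\alpha_n \subseteq \mathcal{F}_n$ with $H(\alpha_n) < \varepsilon / 2^n$. By subadditivity of Shannon entropy, the refinement $\alpha := \bigvee_n \alpha_n$ has entropy at most $\sum_n H(\alpha_n) < \varepsilon$; in particular, only countably many atoms of $\alpha$ have positive measure, so (after lumping all null atoms into one) $\alpha$ may be treated as a countable partition. Each atom of $\alpha$ is $\bigvee_n \mathcal{F}_n$-measurable, and since every $\alpha_n$ is coarser than $\alpha$, the $G$-invariant $\sigma$-algebra generated by $\alpha$ contains each $\mathcal{F}_n$; thus $\alpha$ generates $\bigvee_n \mathcal{F}_n$. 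Hence the Rokhlin entropy of $\bigvee_n \mathcal{F}_n$ is below $\varepsilon$ for every $\varepsilon$, so it vanishes.

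The second step is to show that $\Pi$ is itself such a countable join. Because $(X, \mathcal{B}, \mu)$ is a standard probability space, $\Pi$ is countably generated modulo null sets; fix such a generating sequence $\{A_k\}_{k \in \mathbb{N}}$. A standard monotone-class (equivalently, transfinite-induction) argument shows that the family
\[ \mathcal{D} := \bigl\{ A \in \mathcal{B} : A \in \sigma\bigl(\bigcup_j \mathcal{F}_j\bigr) \text{ for some countable } \{\mathcal{F}_j\} \subseteq \mathcal{Z} \bigr\} \]
is a $\sigma$-algebra containing $\bigcup_{\mathcal{F} \in \mathcal{Z}} \mathcal{F}$, hence equals $\Pi$. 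Therefore each $A_k$ belongs to $\bigvee_{j} \mathcal{F}_{k,j}$ for some countable $\{\mathcal{F}_{k,j}\} \subseteq \mathcal{Z}$. Merging all the $\mathcal{F}_{k,j}$ into one countable family $\{\mathcal{G}_m\}$ and applying the stability lemma yields $\bigvee_m \mathcal{G}_m \in \mathcal{Z}$; since this join contains every $A_k$, it equals $\Pi$, giving $\Pi \in \mathcal{Z}$.

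The main obstacle lies in the first step: one must verify that the joined partition, although a priori uncountable, becomes essentially countable thanks to the finiteness of its entropy, while simultaneously generating the join of the $\mathcal{F}_n$ as a $G$-invariant $\sigma$-algebra. The second step, though rather routine in the classical Kolmogorov setting, still hinges on separability of $(X, \mathcal{B}, \mu)$ and on the observation that countable joins already capture $\Pi$.
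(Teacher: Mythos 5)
Your proof is correct, but it is organized differently from the paper's. The analytic core is the same: both arguments rest on countable subadditivity of Rokhlin entropy (your ``stability lemma'' is the paper's Lemma \ref{subadditive inequality} specialized to zero entropy) together with separability of the measure algebra of a standard probability space. Where you diverge is in the set-theoretic bookkeeping. The paper proves a countable-cofinality lemma only for \emph{monotone chains} of $\sigma$-subalgebras (Lemma \ref{countable subsequence}), feeds this into Zorn's lemma to extract a maximal zero-entropy subalgebra, and then proves uniqueness separately by joining two putative maximal elements. You instead form the join $\Pi$ of \emph{all} zero-entropy invariant subalgebras at once and show, via countable generation of $\Pi$ and your monotone-class argument, that this a priori uncountable join is realized as a countable join of members of $\mathcal{Z}$; this dispenses with Zorn's lemma entirely and delivers existence and uniqueness in a single stroke, at the cost of a slightly more involved reduction (arbitrary families rather than chains). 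One small imprecision worth tightening: when you say the refinement $\bigvee_n \alpha_n$ becomes countable ``after lumping all null atoms into one,'' you should justify that the union of the null atoms is itself null --- otherwise the lumped partition would no longer refine the $\alpha_n$ and would fail to generate $\bigvee_n \mathcal{F}_n$. This does hold: since the information functions of the finite joins $\alpha_1\vee\dots\vee\alpha_N$ increase and have integrals bounded by $\sum_n H(\alpha_n)<\infty$, monotone convergence shows almost every point lies in a positive-measure atom of the full join. The paper absorbs this point by invoking completeness of the space of finite-entropy countable partitions in the Rokhlin metric, citing \cite{Ro67}; you should either do the same or include the information-function argument.
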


After it we will utilize Furstenberg-Zimmer structure theory in order to obtain the following result which resembles classical theorem about Kolmogorov entropy(\cite{Gl03}): 

\begin{theor}\label{thm:mixing}
Suppose that Pinsker factor of an ergodic dynamical system is nonatomic. Then the initial system is a realively weakly mixing extension of its Pinsker factor.
\end{theor}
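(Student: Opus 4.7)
The plan is to argue by contradiction, using the Furstenberg-Zimmer structure theorem as the structural input and reducing the question to an entropy lemma about compact extensions. Assume that the ergodic system $X$ is not a relatively weakly mixing extension of its Pinsker factor $P$. Since $P$ is nonatomic, the Furstenberg-Zimmer decomposition applies and yields an intermediate factor $P \subsetneq Z \subseteq X$ such that $Z$ is a nontrivial relatively compact (equivalently, isometric) extension of $P$. If one can show that $Z$ has zero Rokhlin entropy, this contradicts the maximality of $P$ provided by Theorem~\ref{thm:pinsker_exists}, and the desired relative weak mixing follows.

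The whole argument therefore reduces to establishing the following entropy lemma: \emph{a relatively compact extension of a zero-Rokhlin-entropy system again has zero Rokhlin entropy}. To attack it, I would fix $\varepsilon > 0$ and, using $h^{Rok}(P)=0$, choose a partition $\alpha$ of $P$ whose lift to $Z$ generates the pull-back $\sigma$-algebra of $P$ under the group action and satisfies $H(\alpha) < \varepsilon/2$. I would then try to construct a finite partition $\beta$ of $Z$ with $H(\beta) < \varepsilon/2$ such that the $G$-translates of $\alpha \vee \beta$ generate the full $\sigma$-algebra of $Z$. The trivial subadditivity $H(\alpha \vee \beta) \le H(\alpha) + H(\beta) < \varepsilon$ would then give $h^{Rok}(Z) < \varepsilon$, and letting $\varepsilon \to 0$ finishes the proof.

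For the construction of $\beta$ I would invoke the standard description of compact extensions: up to measurable isomorphism, $Z$ is a $K/H$-bundle over $P$ with the group acting by translations coming from a cocycle into a compact group $K$. Using a measurable trivialization and a finite partition of the fibre $K/H$, one obtains a candidate $\beta$ on $Z$ whose $G$-translates separate points of a typical fibre, due to compactness of $K$ and the isometric nature of the fibre action. The delicate point is simultaneously keeping $H(\beta)$ small; a reasonable device is to take $\beta$ with a single distinguished atom of small measure whose orbit under a sufficiently rich set of group elements sweeps across the fibres, and to use the nonatomicity of $P$ to spread this atom measurably across the base in a way that controls the fibrewise entropy.

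I expect the main obstacle to be precisely this last step. In the classical Kolmogorov setting the analogous assertion is routine and flows from the variational principle or from the Abramov-Rokhlin formula, but for Rokhlin entropy no such asymptotic tools are available, and one must work with single partitions directly. The nonatomicity of $P$ should enter in an essential way here, since it provides enough measurable room on the base to realize fiber-separating partitions of arbitrarily small entropy; without it the argument breaks down and indeed the conclusion of the theorem fails in general, as shown by zero-entropy systems whose Pinsker factor is trivial but which are not weakly mixing, such as compact group rotations.
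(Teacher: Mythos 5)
Your overall strategy coincides with the paper's: contradiction via the Furstenberg--Zimmer dichotomy, reduced to the claim that an isometric extension of a zero-Rokhlin-entropy system has zero Rokhlin entropy. But the proposal stops exactly where the content is. You state the entropy lemma, correctly identify the construction of a small-entropy fibre-separating partition as ``the delicate point,'' and then offer only a heuristic (``a single distinguished atom of small measure whose orbit \dots sweeps across the fibres''). That is the whole theorem; without carrying it out you have not proved anything beyond what Furstenberg--Zimmer already gives. Moreover, your plan to use a single \emph{finite} partition $\beta$ of the fibre is likely unworkable as stated: to separate fibre points at all distances you must resolve the fibre at arbitrarily fine scales, and any finite partition of $K/H$ into small-diameter pieces has entropy on the order of the logarithm of the number of pieces, which you cannot make small.

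The paper fills the gap as follows. Model the isometric extension as $X = Y \times Z$ with $(Z,d)$ compact and fibrewise isometries. Fix $z_0$ in the support of the fibre measure, so every ball $B_{1/i}(z_0)$ has positive measure. Nonatomicity of $Y$ gives base sets $C_i$ of positive measure with $\nu(C_i)\to 0$; set $A_i = C_i\times B_{1/i}(z_0)$ and $\alpha_i=\lbrace A_i, X\setminus A_i\rbrace$, so that $H(\alpha_i)\to 0$ and one may pass to a subsequence with $\sum_i H(\alpha_{n_i})<\varepsilon/2$. The countable join $\beta'\vee\bigvee_i\alpha_{n_i}$ (with $\beta'$ the lift of a near-optimal generating partition of $Y$) separates points: $\beta'$ handles pairs with distinct base coordinates, while for $(y,z_1),(y,z_2)$ with $d(z_1,z_2)=\delta>0$ the isometric fibre maps preserve $\delta$, so by ergodicity the orbit of $(y,z_1)$ eventually enters some $A_{n_i}$ with $2/n_i<\delta$, at which moment $(y,z_2)$ is outside it. Lemma \ref{lem:generating} then shows the join is generating, and subadditivity of Shannon entropy (not of Rokhlin entropy) bounds its entropy by $h(\mathbf{Y},R)+\varepsilon$. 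The two devices you are missing are precisely the use of \emph{one} small ball per scale (so each $\alpha_i$ is a two-set partition of tiny measure, hence tiny entropy) combined with ergodicity to sweep orbits through it, and the summable-entropy subsequence that lets you take a countable join of all scales while keeping the total entropy below $\varepsilon$. Your closing observations --- that nonatomicity is essential and that compact group rotations show the statement fails for atomic Pinsker factors --- are correct and match the paper's hypotheses.
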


{\bf Acknowledgements.} I would like to thank Mikl\'{o}s Ab\'{e}rt for his evening talk in the Arbeitsgemeinschaft on sofic entropy in Oberwolfach, october, 2013, which drew my attention to the Rokhlin entropy. I also would like to thank all the organizers and participants of the Arbeitsgemeinschaft. The term "Rokhlin entropy" is due to Mikl\'{o}s Ab\'{e}rt and Benjamin Weiss. I also would like to thank my advisor Anatoly Vershik for discusssions.

\section{Facts from ergodic theory}

Our intention here is to remind some facts and notions from ergodic theory we will use in proofs. 

A {\em standard probability space} or {\em Lebesgue space} is a measurable space arising from any Borel probability measure on standard Borel space. Such spaces have a lot of convenient properties(see \cite{Ro49}, \cite{Gl03}). 
Consider a standard probability space $\mathbf{X} = (X, \mathscr{X}, \mu)$ (there $X$ is a set, $\mathscr{X}$ is a $\sigma$-algebra and $\mu$ is a measure). A $\sigma$-subalgebra $\mathscr{A} \subset \mathscr{X}$ is said to be {\em complete} if for every $A \in \mathscr{A}$ and $B \in \mathscr{X}$ such that $\mu(A \Delta B)=0$ (there $\Delta$ denotes the symmetric difference operaion) we have $B \in \mathscr{A}$. Obviously, any $\sigma$-subalgebra has a {\em completion}.
Suppose $\pi: \mathbf{X} \to \mathbf{Y}$ is a measure-preserving map between standard probability spaces $\mathbf{X}=(X,\mathscr{X}, \mu)$ and $\mathbf{Y}=(Y,\mathscr{Y}, \nu)$. In this situation pair $\mathbf{Y}$ and $\pi$ is called a {\em factor}, $\pi$ is called {\em a factor-map}  and $\mathbf{Y}$ is called a factor-space.
There is a corespondent $\sigma$-subalgebra for any factor: consider the completion $\mathscr{Y}'$ of the $\sigma$-subalgebra $\lbrace \pi^{-1}(A) \vert A \in \mathscr{Y}\rbrace$. And also, for every complete $\sigma$-subalgebra in $\mathscr{X}$ there is such a factor that corespondent $\sigma$-subalgebra is exactly as given. So, there is a natural corespondence between $\sigma$-subalgebras and factors. for more details see \cite{Gl03}.

{\em Dynamical system} or {\em $G$-space} is a pair $(\mathbf{X},T)$ of a standard probability space $\mathbf{X}=(X, \mathscr{X}, \mu)$   and of an action of a countable group $G$ by measure preserving transformations. 
Two dynamical systems $(\mathbf{X}, T)$ and  $(\mathbf{Y}, S)$ with the same acting group are said to be {\em isomorphic} or {\em conjugated} if there is a meaure-preserving isomorphism $\varphi: \mathbf{X} \to \mathbf{Y}$ which is equivariant with respect to actions: $\varphi(T^g(x))= S^g(\varphi(x))$ for every $g$ and for almost every $x$.  A {\em factor} of the system $(\mathbf{X},T)$ is a pair of a system $(\mathbf{Y},S)$ and an equivariant measure-preserving map $\pi: \mathbf{X} \to \mathbf{Y}$. An {\em extension} of the system $(\mathbf{Y},S)$ is a pair of a system $(\mathbf{X},T)$ and an equivariant measure-preserving map $\pi: \mathbf{X} \to \mathbf{Y}$. Two extensioms $((\mathbf{X}_1, T_1), \pi_1)$ and $((\mathbf{X}_2, T_2), \pi_2)$ are said to be isomorphic if there is such an equivariant measure preserving a.e. bijection $\psi: \mathbf{X}_1 \to \mathbf{X}_2$ that $\pi_1 = \pi_2 \circ \psi$.

A $\sigma$-sibalgebra $\mathscr{A}$ is called invariant subalgebra if for every set $A \in \mathscr{A}$ and for every $g \in G$ we have $T^g(A) \in \mathscr{A}$ too. It is not hard to see that on the corespondent factor-space a factor-action $S$ of the group $G$ could be defined. So there is a natural corespondence between factors and invariant $\sigma$-subalgebras(see \cite{Gl03}).

Let $\mathbf{X}=(X, \mathscr{X}, \mu)$ be a standart probability space. A {\em partition} $\alpha$ is a countable or finite collection of disjoint measurable subsets of $\mathbf{X}$ covering all the space. A partition $\alpha$ is said to be measurable with respect to $\sigma$-subalgebra $\mathscr{A}$ if all its elements belong to $\mathscr{A}$.  
Consider a dynamical system $(\mathbf{X},T)$. We will say that $\sigma$-subagebra is generated by the partition if it is the smallest complete invariant $\sigma$-subalgebra with respect to that the partition is measurable. Partition is said to be {\em generating} if it generates the $\sigma$-algebra $\mathscr{X}$.

Let $(\mathbf{X}, T)$,$(\mathbf{Y},S)$ be two dynamical systems and let $\pi: \mathbf{X} \to \mathbf{Y}$ be a factor-map. For  any partition $\alpha$ of $\mathbf{X}$, measurable with respect to he $\sigma$-subalgebra, corespondent to the factor $(\mathbf{Y}, \pi)$, there is a unique  up to the set of measure $0$ partition $\beta$ of $\mathbf{Y}$ such that $\alpha=\pi^{-1}(\alpha)$. 

\begin{lem}\label{lem:generating}
Let $(\mathbf{X}, T)$ (where $\mathbf{X}=(X, \mathscr{X}, \mu)$) be a dynamical system, let $\mathscr{A}$ be an invariant  $\sigma$-subalgebra, and let $\alpha$ be a $\mathscr{A}$-measurable partition of $\mathbf{X}$. Let $\beta$ be such a partition of $\mathbf{Y}$ that $\alpha=\pi^{-1}(\beta)$.
Let $(\mathbf{Y}, S)$ be a corespondent factor-action and let $\pi: \mathbf{X} \to \mathbf{Y}$ be a factor-map.
The following are eqwuivalent: 
\begin{enumerate}
\item{$\alpha$ generates the $\sigma$-subalgebra $\mathscr{A}$}
\item{$\beta$ generates the $\sigma$-algebra $\mathscr{Y}$}
\item{$\beta$ separates points of $\mathbf{Y}$, that is there exists a conull set $Y' \subset Y$ such that for any $y_1, y_2 \in Y'$,  $y_1 \neq y_2$ there is such $g \in G$ that $S^g(y_1)$ and $S^g(y_2)$ lie in different parts of $\beta$. }
\end{enumerate}
\end{lem}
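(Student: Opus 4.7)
The plan is to treat $(1)\Leftrightarrow(2)$ as bookkeeping through the factor correspondence and to reduce $(2)\Leftrightarrow(3)$ to a well-known property of countably generated $\sigma$-algebras on standard Borel spaces.

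First, for $(1)\Leftrightarrow(2)$, I would invoke the bijective correspondence (described right before the lemma) between invariant complete sub-$\sigma$-algebras of $\mathscr{X}$ containing the completion of $\pi^{-1}(\mathscr{Y})$ and invariant complete sub-$\sigma$-algebras of $\mathscr{Y}$. Under this correspondence $\mathscr{A}$ itself corresponds to all of $\mathscr{Y}$, and the invariant complete sub-$\sigma$-algebra generated by $\alpha$ corresponds to the invariant complete sub-$\sigma$-algebra generated by $\beta$, since $\alpha=\pi^{-1}(\beta)$ and taking preimages commutes with $\sigma$-algebra generation and with closing under the group action. Hence $\alpha$ generates $\mathscr{A}$ if and only if $\beta$ generates $\mathscr{Y}$.

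For $(2)\Leftrightarrow(3)$, note first that the invariant complete $\sigma$-algebra generated by $\beta$ is the completion of $\sigma(\{S^{g}(B):g\in G,\ B\in\beta\})$, which is countably generated because $G$ is countable and $\beta$ is at most countable. The atoms of a countably generated sub-$\sigma$-algebra are precisely the equivalence classes of the relation ``$y_1$ and $y_2$ belong to the same generator sets''; for the generators $S^{g}(B)$ this relation says exactly that $S^{g}(y_1)$ and $S^{g}(y_2)$ lie in the same part of $\beta$ for every $g\in G$, i.e.\ the negation of the condition in (3). The classical theorem about Lebesgue spaces (see \cite{Ro49} and \cite{Gl03}) states that a countably generated sub-$\sigma$-algebra of a standard probability space coincides, modulo null sets, with the full $\sigma$-algebra if and only if its atoms are singletons off a null set. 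Applied to the invariant $\sigma$-algebra generated by $\beta$, this yields $(2)\Leftrightarrow(3)$.

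The main obstacle is careful bookkeeping of null sets, since both the factor correspondence and the atom characterisation only hold modulo sets of measure zero: one must verify that the conull set $Y'$ produced by the atom-singleton characterisation can be arranged to coincide with (or be refined inside) the conull set appearing in condition (3), and that completions do not affect generation statements. All of this is standard Lebesgue-space manipulation and can be dispatched by the references above, so after setting up the correspondence the proof amounts to chasing definitions.
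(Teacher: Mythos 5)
The paper states this lemma without proof --- it appears in the preliminary section ``Facts from ergodic theory'' as a standard fact, with the general references \cite{Ro49} and \cite{Gl03} --- so there is no proof of record to compare against. Judged on its own, your argument is the standard one and is essentially correct: $(1)\Leftrightarrow(2)$ is the functoriality of the factor correspondence (preimage under $\pi$ commutes with generation of complete invariant $\sigma$-algebras and is injective on the measure algebra), and $(2)\Leftrightarrow(3)$ is Rokhlin's characterisation of when a countably generated sub-$\sigma$-algebra of a Lebesgue space is, modulo null sets, the full $\sigma$-algebra, namely that it separates points off a null set; the orbit $\{S^{g}(B): g\in G,\ B\in\beta\}$ is countable, so the theorem applies. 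One slip to fix: the correspondence you invoke should be between complete invariant sub-$\sigma$-algebras of $\mathscr{Y}$ and those sub-$\sigma$-algebras of $\mathscr{X}$ \emph{contained in} the completion of $\pi^{-1}(\mathscr{Y})$ (i.e.\ in $\mathscr{A}$), not those \emph{containing} it; the latter would parametrise intermediate extensions between $\mathbf{X}$ and $\mathbf{Y}$, which is not what is needed here since $\alpha$ is $\mathscr{A}$-measurable. With that correction, and the null-set bookkeeping you already acknowledge, the proof is complete.
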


\section{Basic facts on Rokhlin entropy}\label{sect:basic}

All the $\sigma$-subalgebras in the sequel are complete.

Let $\mathbf{X}=(X,\mathscr{X}, \mu)$ be a standard probability space and let $(\mathbf{X},T)$ be a dynamical system.  
The Shannon entropy of the parition $\alpha = \lbrace A_1, A_2, \ldots \rbrace$ is given by the formula $H(\alpha)= - \sum_{i=0} \mu(A_i) \log(\mu(A_i))$ with the usual convention $0\log0=0$. The Rokhlin entropy of an invatiant $\sigma$-subalgebra $\mathscr{A}$ is the infimum of the Shannon entropies of generating partitions which are measurable with respect to this subalgebra. We will denote it by $h(\mathscr{A})$. The Rokhlin entropy $h(\mathbf{X},T)$ of a dynamical system is defined as Roklin entropy of $\sigma$-algebra $\mathscr{X}$.
We will say that $\sigma$-algebra $\mathscr{A}$ is generated by the set $\lbrace \mathscr{A}_i \rbrace$ of $\sigma$-algebras  if $\mathscr{A}$ is a minimal $\sigma$-algebra containing this subalgebras and we will denote it by $\mathscr{A}=\bigvee \mathscr{A}_i$. We will also say in this situation that $\mathscr{A}$ is join of algebras $\lbrace \mathscr{A}_i \rbrace$. 
\begin{lem}\label{subadditive inequality}
Suppose $\lbrace \mathscr{A}_1, \mathscr{A}_2, \ldots\rbrace$ is a countable or finite set of invariant $\sigma$-subalgebras and $\mathscr{A}$ is their join. Then $h(\mathscr{A}) \leq \sum{h(\mathscr{A}_i)}$.
\end{lem}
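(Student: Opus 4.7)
\noindent\textit{Proof plan.} I may assume $\sum_i h(\mathscr{A}_i)<\infty$, otherwise the inequality is vacuous. Fix $\varepsilon>0$ and, for each $i$, choose a partition $\alpha_i$ generating $\mathscr{A}_i$ as a complete invariant subalgebra, with $H(\alpha_i)\leq h(\mathscr{A}_i)+\varepsilon\cdot 2^{-i}$; this is possible since each $h(\mathscr{A}_i)<\infty$. Then $\sum_i H(\alpha_i)\leq\sum_i h(\mathscr{A}_i)+\varepsilon$. The strategy will be to exhibit a single countable $\mathscr{A}$-measurable partition $\beta$ generating $\mathscr{A}$ with $H(\beta)\leq\sum_i H(\alpha_i)$; since $\varepsilon$ is arbitrary, the lemma will follow.

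The candidate for $\beta$ is the formal join $\bigvee_i\alpha_i$. Its finite truncations $\beta_k:=\bigvee_{i=1}^{k}\alpha_i$ are countable and, by the classical subadditivity of Shannon entropy for finite joins, satisfy $H(\beta_k)\leq\sum_{i=1}^{k}H(\alpha_i)$. The main obstacle will be that $\beta$ is a priori indexed by the uncountable product $\prod_i\alpha_i$, so it might fail to be a partition in the sense of this paper (which requires countably many parts). I plan to dispose of this by a monotone-convergence argument. Let $M_k(x)$ be the $\mu$-measure of the atom of $\beta_k$ containing $x$. Because $\beta_{k+1}$ refines $\beta_k$, the sequence $M_k$ is pointwise nonincreasing; its limit $M_\infty(x)$ equals the measure of the $\beta$-atom through $x$. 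Since $-\log M_k$ is nonnegative and pointwise nondecreasing, monotone convergence will give
\begin{equation*}
\int_{X}(-\log M_\infty)\,d\mu\;=\;\lim_{k}H(\beta_k)\;\leq\;\sum_i H(\alpha_i)\;<\;\infty.
\end{equation*}
A finite integral of a nonnegative function forces $M_\infty>0$ $\mu$-almost everywhere, so almost every point lies in a positive-measure atom of $\beta$. After dumping the remaining null set into an arbitrarily chosen atom, $\beta$ becomes a genuine countable partition of $\mathbf{X}$ with $H(\beta)=\int(-\log M_\infty)\,d\mu\leq\sum_i H(\alpha_i)$.

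It then remains to verify that $\beta$ generates $\mathscr{A}$. One inclusion is immediate: $\beta$ is $\mathscr{A}$-measurable, so the complete invariant subalgebra it generates lies inside $\mathscr{A}$. For the reverse, the positive-measure atoms of $\beta$ form a countable refinement of each $\alpha_i$, so $\sigma(\alpha_i)\subseteq\sigma(\beta)$ (mod null sets) for every $i$; pushing forward by $T^g$ for $g\in G$ and joining, the complete invariant subalgebra generated by $\beta$ contains $\bigvee_{g}T^g\sigma(\alpha_i)=\mathscr{A}_i$ for every $i$, hence contains $\bigvee_i\mathscr{A}_i=\mathscr{A}$. Putting everything together, $h(\mathscr{A})\leq H(\beta)\leq\sum_i h(\mathscr{A}_i)+\varepsilon$, and letting $\varepsilon\to 0$ finishes the argument. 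The only genuinely nontrivial step is the monotone-convergence estimate which upgrades $\beta$ to an honest countable partition; the rest is bookkeeping.
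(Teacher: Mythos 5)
Your proposal is correct and follows essentially the same route as the paper: pick for each $\mathscr{A}_i$ a generating partition $\alpha_i$ with $H(\alpha_i)\le h(\mathscr{A}_i)+\varepsilon/2^i$ and pass to the countable join $\bigvee_i\alpha_i$, using subadditivity of Shannon entropy. The only difference is that where the paper simply cites completeness of the space of partitions in the Rokhlin metric (Rokhlin, 1967) to guarantee that $\bigvee_i\alpha_i$ exists as a countable partition with $H(\bigvee_i\alpha_i)\le\sum_i H(\alpha_i)$, you prove this directly by monotone convergence of the information functions $-\log M_k$ --- a correct and self-contained substitute for that citation.
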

\begin{proof}   
Statement is obviously true if the sum of entropies is infinite.
Otherwise let us take any $\varepsilon>0$. By the definition, for any $i$ there is a generating for $\mathscr{A}_i$ partition $\alpha_i$ of Shannon entropy smaller than $h(\mathscr{A}_i)+\varepsilon/2^i$ . By completness of the space of partitions with respect to Rokhlin metric(see \cite{Ro67}), we have that there exists $\bigvee_i \alpha_i$ and $H(\bigvee_i \alpha_i) \leq \sum_i H(\alpha_i) < \sum_i h(\mathscr{A}_i) + \varepsilon$. It is obviously a generating partition for $\mathscr{A}$. 
\end{proof}

\begin{lem}\label{countable subsequence}
Suppose $I$ is a linearly ordered set and $\lbrace \mathscr{A}_i \rbrace_{i \in I}$ is a monotone sequence of $\sigma$-subalgebras, that is $\mathscr{A}_i \subseteq \mathscr{A}_j$ for $i \leq j$. Then there is a countable subset $J$ in $I$ such that $\bigvee_{i \in I}{\mathscr{A}_i}=\bigvee_{j \in J}{\mathscr{A}_i}$.
\end{lem}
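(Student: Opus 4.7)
The plan is to reduce the question about $\sigma$-algebras to a question about separable Hilbert spaces. Write $\mathscr{A}=\bigvee_{i\in I}\mathscr{A}_i$. Because $\mathbf{X}$ is a standard probability space, $L^2(\mathbf{X},\mathscr{X},\mu)$ is separable, and so is its closed subspace $L^2(\mathbf{X},\mathscr{A},\mu)$.

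Next I would exploit monotonicity. Since the family $\{\mathscr{A}_i\}$ is linearly ordered by inclusion, the family of closed subspaces $\{L^2(\mathbf{X},\mathscr{A}_i,\mu)\}$ is also increasing, so their union is already a linear subspace of $L^2(\mathbf{X},\mathscr{A},\mu)$. The closure of this union is the $L^2$-space of the $\sigma$-subalgebra generated by $\bigcup_i\mathscr{A}_i$, which is exactly $\mathscr{A}$; so
\[
\overline{\bigcup_{i\in I} L^2(\mathbf{X},\mathscr{A}_i,\mu)}=L^2(\mathbf{X},\mathscr{A},\mu).
\]

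Now I would choose a countable dense subset $\{f_n\}_{n\in\mathbb{N}}$ of $L^2(\mathbf{X},\mathscr{A},\mu)$. By the density statement above, for every $n$ and every $k\in\mathbb{N}$ there exist $i_{n,k}\in I$ and $g_{n,k}\in L^2(\mathbf{X},\mathscr{A}_{i_{n,k}},\mu)$ with $\|f_n-g_{n,k}\|_2<1/k$. Set $J=\{i_{n,k}\colon n,k\in\mathbb{N}\}$, a countable subset of $I$, and $\mathscr{B}=\bigvee_{j\in J}\mathscr{A}_j$. Since each $g_{n,k}$ lies in $L^2(\mathbf{X},\mathscr{B},\mu)$, the closure of $\{g_{n,k}\}$ is a closed subspace of $L^2(\mathbf{X},\mathscr{B},\mu)$ that contains the dense set $\{f_n\}$; hence $L^2(\mathbf{X},\mathscr{B},\mu)=L^2(\mathbf{X},\mathscr{A},\mu)$. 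For complete $\sigma$-subalgebras, equality of their $L^2$-spaces (equivalently, of the corresponding conditional expectation operators) forces equality of the $\sigma$-subalgebras themselves, so $\mathscr{B}=\mathscr{A}$, as required.

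The only technical point that needs a moment's care is the identity $\overline{\bigcup_i L^2(\mathscr{A}_i)}=L^2(\bigvee_i\mathscr{A}_i)$; this is where monotonicity is used, since for a general family the union would only be contained in the corresponding $L^2$-space, whereas here the union is already a subspace and contains all indicator functions of sets in $\bigcup_i\mathscr{A}_i$, an algebra generating $\bigvee_i\mathscr{A}_i$. I expect this to be the main (mild) obstacle; everything else is bookkeeping with separability.
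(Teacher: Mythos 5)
Your proof is correct and is essentially the paper's argument: the paper works with the measure algebra $(\mathscr{X},d)$, $d(A,B)=\mu(A\Delta B)$, as a complete separable semimetric space, notes that by monotonicity the join corresponds to the closure of the union, and extracts a countable subfamily by separability. Passing to $L^2(\mathbf{X},\mu)$ instead of the measure-algebra metric is only a cosmetic difference (indicators embed the latter isometrically into the former), though your write-up usefully spells out the two points the paper leaves implicit, namely that $\overline{\bigcup_i L^2(\mathscr{A}_i)}=L^2\bigl(\bigvee_i\mathscr{A}_i\bigr)$ for a monotone family and that equality of $L^2$-spaces of complete $\sigma$-subalgebras forces equality of the subalgebras.
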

\begin{proof}
The set $\mathscr{X}$ endowed with the metric $d(A,B)=\mu(A \Delta B)$ is a complete separable semimetric space. It is not hard to see that the join of the monotone sequence of $\sigma$-subalgebras corespond to the closure of the union of corespondent subset in the semimetric space $(\mathscr{X},d)$. Now we can refine a countable subsequence with the same closure of the union since our space is separable semimetric.
\end{proof}

\begin{proof}[Proof of theorem \ref{thm:pinsker_exists}]
Let us consider the set of all the invariant $\sigma$-subalgebras of zero Rokhlin entropy, ordered by the inclusion relation. By the two previous lemmas and Zorn's lemma we have that there is at least one maximal zero entropy algebra. It is easy to see that it is unique, since two maximal subalgebras could be joined, and their join will differ from both of them and have zero entropy, which lead to contradiction .
\end{proof}

\section{Pinsker factor and relative weak mixing}\label{sect:weak_mixing}

Suppose $(\mathbf{X}, T)$ ($\mathbf{X}=(X,\mathscr{X}, \mu)$ )is a $G$-space and $(\mathbf{Y}, R)$ ($\mathbf{Y}=(Y,\mathscr{Y}, \nu)$) is its factor. System $(\mathbf{X},T)$ is called relatively weakly mixing over  $(\mathbf{Y},S)$ if its relatively independent joining with any ergodic system over the common factor $(\mathbf{Y},S)$ is ergodic.

Let $(\mathbf{Y}, R)$ be a $G$-space. Consider a metric compact $(Z,d)$ with the Borel probability measure $\eta$, which is invariant under the action of isometries, and let $(g,y)\mapsto S^{g,y}$ (where $y \in \mathbf{Y}$ and $g \in G$) be such a measurable family of isometries that $S^{g,R^h(y)} \circ S^{h,y}=S^{gh,y}$ for every$g,h \in G$ and almost every $y \in Y$, and $S^{e,y}=id$ for almost every $y$ (where $e$ is a group identity an $id$ is an identity map). Then we could define an action of the group $G$ by the formula $Q^g: (y,z) \mapsto (R^g(y), S^{g,y}(x))$ on the set $Y \times Z$ with the product measure. Obviously, this defines dynamical system, which is naturally is extension of $(\mathbf{Y}, R)$. Any extension of $(\mathbf{Y}, R)$ isomorphic to the one obtained in this way is called an {\em isometric extension}.
The following is famous Furstenberg-Zimmer dichotomy(see \cite{Zi76}, \cite{Fu77}, \cite{Gl03}):
\begin{theor}[Furstenberg, Zimmer]
Suppose we have an ergodic extension of an ergodic dynamical system. Then exactly one of the following statements holds:
\begin{enumerate}
\item{extension is weakly mixing}
\item{there is a nontrivial intermediate isometric extension}
\end{enumerate}
\end{theor}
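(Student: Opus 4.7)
The plan is to identify an intermediate factor of $\pi: \mathbf{X} \to \mathbf{Y}$ corresponding to the \emph{almost periodic} functions over $\mathbf{Y}$, and to show that its non-triviality is precisely the obstruction to relative weak mixing. First I would use the standard equivalence: the extension fails to be relatively weakly mixing iff the relatively independent self-joining $\mathbf{X} \times_{\mathbf{Y}} \mathbf{X}$ is non-ergodic, equivalently iff $L^2(\mathbf{X} \times_{\mathbf{Y}} \mathbf{X})$ contains a $G$-invariant function not lifted from $\mathbf{Y}$.

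Next, viewing $L^2(\mathbf{X})$ as a Hilbert module over $L^\infty(\mathbf{Y})$, I would call $f \in L^2(\mathbf{X})$ \emph{almost periodic over $\mathbf{Y}$} if its $G$-orbit lies in a finitely generated $L^\infty(\mathbf{Y})$-submodule, equivalently if it is relatively compact in the conditional norm. I would verify that such functions are closed under sums, products, complex conjugation, and appropriate conditional limits, so they span a $G$-invariant subalgebra which corresponds to an intermediate factor $\mathbf{Z}$ with $\mathbf{Y} \subseteq \mathbf{Z} \subseteq \mathbf{X}$.

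The first main step is to show that non-ergodicity of the self-joining forces $\mathbf{Z} \neq \mathbf{Y}$. Any non-trivial $G$-invariant $F \in L^2(\mathbf{X} \times_{\mathbf{Y}} \mathbf{X})$ acts as the integral kernel of a conditional Hilbert-Schmidt (hence conditionally compact) operator $K_F$ on $L^2(\mathbf{X})$ that commutes with the $G$-action. Any non-zero spectral subspace of $K_F$ is a finitely generated $L^\infty(\mathbf{Y})$-submodule, and non-triviality of $F$ over the diagonal produces at least one such subspace containing a function not lifted from $\mathbf{Y}$, yielding a non-trivial almost periodic element and hence $\mathbf{Y} \subsetneq \mathbf{Z}$.

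The remaining, and hardest, step is to realize $\mathbf{Z}$ as an isometric extension of $\mathbf{Y}$ in the cocycle sense defined before the theorem. The plan is to disintegrate $\mathbf{Z}$ over $\mathbf{Y}$ as a measurable field of compact metric probability spaces $(Z_y, d_y, \eta_y)$ on which the relative action $S^{g,y}: Z_y \to Z_{R^g y}$ consists of isometries forming a measurable cocycle; the metric is assembled from the finite-dimensional eigenspaces of the conditional compact operators above. The principal obstacle is the measurable construction of this metric field (measurable selection of compatible orthonormal frames in the fibers) together with verification of the cocycle identity almost everywhere, which is the core of Zimmer's structure theorem. The converse direction of the dichotomy is easier: on a non-trivial isometric extension the relative distance function defines a non-trivial $G$-invariant element of $L^2$ of the self-joining, precluding relative weak mixing.
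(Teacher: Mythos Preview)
The paper does not prove this theorem at all: it is quoted as the classical Furstenberg--Zimmer dichotomy with references to \cite{Zi76}, \cite{Fu77}, \cite{Gl03}, and is then used as a black box in the proof of Theorem~\ref{thm:mixing}. So there is no ``paper's own proof'' to compare against.

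Your outline is the standard argument one finds in those references, and the overall architecture is correct: characterize failure of relative weak mixing via a nontrivial invariant function in $L^2(\mathbf{X}\times_{\mathbf{Y}}\mathbf{X})$, interpret that function as a conditional Hilbert--Schmidt kernel commuting with the action, extract finitely generated invariant $L^\infty(\mathbf{Y})$-submodules from its spectral decomposition, and assemble the resulting relatively almost periodic functions into an intermediate factor which is then shown to be isometric over $\mathbf{Y}$. The converse you sketch (an isometric intermediate factor yields an invariant function on the relative square via the fiberwise metric) is also the standard way to see the two alternatives are mutually exclusive.

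Two points of precision are worth flagging if you intend to turn this into an actual proof rather than a plan. First, your definition of ``almost periodic over $\mathbf{Y}$'' as having orbit contained in a \emph{single} finitely generated $L^\infty(\mathbf{Y})$-module is slightly too restrictive; the correct notion is conditional total boundedness of the orbit (for every $\varepsilon>0$ the orbit is covered by finitely many conditional $\varepsilon$-balls), and the functions you describe are dense in, but not all of, the compact part. Second, the step you yourself flag as hardest --- upgrading ``compact extension'' to ``isometric extension'' in the cocycle sense used in this paper --- genuinely requires the Mackey/Zimmer machinery (measurable selection of frames, reduction of the cocycle to a compact group of isometries), and is where most of the real work lies; a complete write-up cannot skip it.
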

In order to prove the theorem \ref{thm:mixing} we will need the following lemma.
\begin{lem}
Suppose $(\mathbf{X},T)$ is an ergodic isometric extension of ergodic system $(\mathbf{Y},R)$ such that the space         $\mathbf{Y}$ has no atoms. Then $h(\mathbf{X},T) \leq h(\mathbf{Y},R)$. 
\end{lem}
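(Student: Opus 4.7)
The approach will be to construct, for every $\varepsilon>0$, a generating partition $\alpha$ of $\mathbf{X}$ with $H(\alpha)\le h(\mathbf{Y},R)+2\varepsilon$; letting $\varepsilon\to 0$ then gives the desired bound. Throughout, I will use the concrete form of the isometric extension: $X=Y\times Z$ with product measure $\mu\otimes\eta$ and $T^g(y,z)=(R^g y,\,S^{g,y}(z))$, where each $S^{g,y}$ is an isometry of the compact metric space $(Z,d)$.

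Fix $\varepsilon>0$. Choose a generating partition $\beta$ of $\mathbf{Y}$ with $H(\beta)<h(\mathbf{Y},R)+\varepsilon$ and a sequence of finite Borel partitions $\gamma_k$ of $Z$ with $\mathrm{mesh}(\gamma_k)\to 0$ (which exists because $Z$ is compact metric). Using the non-atomicity of $\mu$, partition $Y=\bigsqcup_{k\ge 1}Y_k$ into measurable pieces with $q_k:=\mu(Y_k)>0$, choosing the probability vector $(q_k)$ to decay fast enough that $H((q_k))+\sum_k q_k\log|\gamma_k|<\varepsilon$; any sufficiently rapid decay works, and this is the only place where the non-atomicity hypothesis is used. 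Define
\[
\alpha:=\{(B\cap Y_k)\times C : B\in\beta,\ k\ge 1,\ C\in\gamma_k\}.
\]
A direct computation splitting $-\log(\mu(B\cap Y_k)\eta(C))$ across the two factors yields $H(\alpha)=H(\beta\vee\{Y_k\}_k)+\sum_k q_k H(\gamma_k)$, which by Shannon subadditivity is at most $H(\beta)+H((q_k))+\sum_k q_k\log|\gamma_k|<h(\mathbf{Y},R)+2\varepsilon$.

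It remains to verify that $\alpha$ generates $\mathscr{X}$, which by Lemma \ref{lem:generating}(3) reduces to showing that the $G$-orbit of $\alpha$ separates points on a conull subset of $X$. Consider two distinct points $(y_1,z_1),(y_2,z_2)\in X$. If $y_1\ne y_2$, then the pulled-back partition $\beta$ is a coarsening of $\alpha$ and already separates $y_1,y_2$ via its $G$-orbit, since $\beta$ generates $\mathbf{Y}$. Otherwise $y_1=y_2=y$ and $z_1\ne z_2$; pick any $k$ with $\mathrm{mesh}(\gamma_k)<d(z_1,z_2)$. By ergodicity of $R$ together with $q_k>0$, for almost every $y$ there is a $g\in G$ with $R^g y\in Y_k$; since $S^{g,y}$ is an isometry, the images $S^{g,y}(z_1),S^{g,y}(z_2)$ remain at distance $d(z_1,z_2)>\mathrm{mesh}(\gamma_k)$, so they lie in distinct cells of $\gamma_k$, and therefore $T^g(y,z_1)$ and $T^g(y,z_2)$ lie in distinct cells of $\alpha$. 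Intersecting the ergodic full-measure sets over the countably many $k$'s produces a single conull $Y'\subset Y$ working uniformly in $(z_1,z_2)$, and $Y'\times Z$ is the required conull subset of $X$.

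The main obstacle is this second separation case: although the pair $(z_1,z_2)$ ranges over an uncountable set, only countably many scales $k$ are needed and one application of ergodicity per scale suffices, so a single conull set of base points handles all pairs. The entropy bookkeeping and the distance-preservation of $S^{g,y}$ are routine once non-atomicity of $\mathbf{Y}$ unlocks the flexibility to split it into arbitrarily many pieces of prescribed small positive mass; without this hypothesis, encoding the refining sequence $(\gamma_k)$ into a single partition of $X$ at negligible entropy cost would not be possible.
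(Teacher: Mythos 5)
Your argument is correct in outline and takes a genuinely different (and in some respects cleaner) route than the paper's. The paper keeps the pulled-back partition $\beta'$ and adjoins a sequence of two-set partitions $\lbrace A_i, X\setminus A_i\rbrace$ with $A_i=C_i\times B_{1/i}(z_0)$, where $C_i\subset Y$ has small measure and $z_0$ is a fixed point of the support of the fiber measure; separating two points of the same fiber then requires steering the orbit of $(y,z_1)$ into the specific set $A_i$, i.e.\ it implicitly uses ergodicity of the whole system $(\mathbf{X},T)$. Your variable-fiber partition $\lbrace (B\cap Y_k)\times C\rbrace$ covers all of $Z$ at every scale, so you only need to steer the base point into $Y_k$ --- ergodicity of $(\mathbf{Y},R)$ suffices --- and the isometry property does the rest. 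Both proofs use non-atomicity in the identical role: to manufacture positive-measure base sets of arbitrarily small measure so that the fiber information is encoded at negligible entropy cost. Your treatment of the ``uncountably many pairs, countably many scales'' point, and the reduction to Lemma \ref{lem:generating}(3), are exactly right.

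One step is wrong as literally written, though trivially repaired. Since $\sum_k q_k=1$, the sum $\sum_k q_k\log|\gamma_k|$ is a convex combination of the numbers $\log|\gamma_k|$ and hence is at least $\min_k\log|\gamma_k|\ge\log 2$ whenever every $\gamma_k$ has at least two cells; so no ``sufficiently rapid decay'' of $(q_k)$ can force this sum below $\varepsilon<\log 2$ --- the mass removed from the tail must land on $Y_1$, which then pays the full price $\log|\gamma_1|$. The fix is to take $\gamma_1=\lbrace Z\rbrace$ trivial (harmless, since you only need $\mathrm{mesh}(\gamma_k)\to 0$ along the sequence and the separation step simply picks some $k$ of small mesh), give $Y_1$ mass $1-\delta$, and spread $\delta$ over the remaining $Y_k$ rapidly enough that both $H((q_k))$ and $\sum_{k\ge 2}q_k\log|\gamma_k|$ are below $\varepsilon/2$. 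With that adjustment the entropy estimate, and hence the whole proof, goes through.
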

\begin{proof}
Let $h(\mathbf{Y})<\infty$, since otherwise the statement is obvious. 
Let $\mathbf{Y}=(Y,\mathscr{Y}, \nu)$ and $\mathbf{X}=(X,\mathscr{X}, \mu)$. Without loss of generality we can assume that $X=Y \times Z$, there $(Z,d)$ is a metric compact and the action is as
in the definition of an isometric extension. 
Take $\varepsilon>0$. 
Let $\beta$ be a generating partition for $(\mathbf{Y},R)$, such that $H(\beta)<h(\mathbf{Y},R)+\varepsilon/2$. Take $\beta'$ to be corespondent partition of $\mathbf{X}$. By the lemma \ref{lem:generating} there is a a conull set $X' \subset X$, such that for every two points $(y_1,z_1),(y_2,z_2) \in X'$ with $y_1 \neq y_2$, there is such a $g \in G$ that $T^g(y_1,z_1)$ and  $T^g(y_2,z_2)$ lie in the different parts of $\beta'$.
Since $\mathbf{Y}$ has no atoms and it is a standard probability space, we have a sequence $\lbrace C_i \rbrace $ of positive measure subsets in $X$ such that $\nu(C_i) \to 0$. Let us take any point $z_0$ from the support of the measure on the set $Z$. Obviously, any ball $B_r(z_0)$ of positive radius has positive measure. Let   $\lbrace A_i \rbrace$ be the sequence of measurable subsets in $X$ of the form $A_i=C_i \times B_{1/i}(z_0) \subset X \times Z$. Let $\alpha_i = \lbrace A_i, X \setminus A_i\rbrace$ be the sequence of paritions. It is easy to see that $H(\alpha_i) \to 0$. Let us take such a subsequence $n_i \to \infty$
 that $\sum_i{H(\alpha_{n_i})}<\varepsilon/2$, we have that $\beta' \vee \bigvee_i \alpha_{n_i}$ dynamically separates point from the conull subset of $\mathbf{X}$. So it is a generating partition and its entropy is smaller than $h(\mathbf{Y},R)+\varepsilon$. We are done, since $\varepsilon$ could be taken arbitrarily small.
\end{proof}

\begin{proof}[Proof of the theorem \ref{thm:mixing}]
Suppose, on the contrary, the Pinsker factor has a nontrivial intermediate isometric extension. By the previous lemma this extension also has zero entropy, a contradiction.
\end{proof}

\end{document}